\documentclass[12pt,a4paper,reqno]{amsart}
\usepackage{eurosym}

\usepackage{amsmath,amsthm,amscd,amsfonts,amssymb,graphicx,wasysym,soul,enumitem,mathtools,stackrel}
\usepackage{caption}
\usepackage{subcaption}
\usepackage[right=25mm, left=25mm, top=25mm, bottom=25mm]{geometry}

\usepackage[dvipsnames]{xcolor}

\usepackage{tikz}
\usepackage[colorlinks=true,linkcolor=blue,citecolor=red]{hyperref}

\bibliographystyle{amsplain}

\newcommand\redsout{\bgroup\markoverwith{\textcolor{red}{\rule[0.5ex]{2pt}{1.4pt}}}\ULon}

\definecolor{mycyan}{rgb}{0.0, 1.0, 1.0}

\definecolor{mypink}{rgb}{1.0, 0.75, 0.8}

\definecolor{myfuchsiapink}{rgb}{1.0, 0.80, 1.0}

\definecolor{myguppiegreen}{rgb}{0.0, 1.0, 0.5}

\definecolor{mylightcarminepink}{rgb}{0.9, 0.5, 0.9}

\sethlcolor{yellow}

\newtheorem{thm}{Theorem}[section]
\newtheorem*{thrm}{Theorem 1'}

\newtheorem{lem}{Lemma}[section]

\theoremstyle{definition}

\theoremstyle{remark}
\newtheorem{rem}{Remark}[section]

\numberwithin{equation}{section}

\fussy


\begin{document}
\bibliographystyle{amsplain}

\title[Simple proofs of certain results on generalized Fekete-Szeg\H{o} functional]{Simple proofs of certain results on generalized Fekete-Szeg\H{o} functional in the class $\boldsymbol{\mathcal{S}}$}

\author[T. Bulboac\u{a}]{Teodor Bulboac\u{a}}
\address{Faculty of Mathematics and Computers Science, Research Centre of Applied Analysis, Babe\c{s}-Bolyai University, 400084 Cluj-Napoca, Romania.}
\email{bulboaca@math.ubbcluj.ro \& teodor.bulboaca@ubbcluj.ro}

\author[M. Obradovi\'{c}]{Milutin Obradovi\'{c}}
\address{Department of Mathematics,
Faculty of Civil Engineering, University of Belgrade,
Bulevar Kralja Aleksandra 73, 11000, Belgrade, Serbia.}
\email{obrad@grf.bg.ac.rs \& milutin.obradovic@gmail.com}

\author[N. Tuneski]{Nikola Tuneski}
\address{Department of Mathematics and Informatics, Faculty of Mechanical Engineering, Ss. Cyril and
Methodius
University in Skopje, Karpo\v{s} II b.b., 1000 Skopje, Republic of North Macedonia.}
\email{nikola.tuneski@mf.edu.mk \& ntuneski@gmail.com}

\subjclass[2020]{30C45, 30C50}

\keywords{univalent functions, convex functions, Fekete-Szeg\H{o} functional, Koebe function, L\'{o}wner chains}

\begin{abstract}
In this paper we give simple proofs for the main results concerning generalized Fekete-Szeg\H{o} functional of type $\left|a_{3}(f)-\lambda a_{2}(f)^{2}\right|-\mu|a_{2}(f)|$, where $\lambda\in\mathbb{C}$, $\mu>0$ and $a_{n}(f)$ is $n$-th coefficient of the power series expansion of $f\in\mathcal{S}$. In addition, we studied this functional separately for the class $\mathcal{K}$ of convex functions and we emphasize that all the results of the paper are sharp (i.e. the best possible). The advantages of the present study are that the techniques used in the proofs are more easier and use known results regarding the univalent functions, and those that it give the best possible results not only for the entire class of univalent normalized functions $\mathcal{S}$ but also for its subclass of convex functions $\mathcal{K}$.
\end{abstract}

\maketitle

\section{Introduction and definitions}\label{sectintro}

Let $\mathcal{A}$ be the class of functions $f$ analytic in the open unit disk $\mathbb{D}=\{z\in\mathbb{C}:|z|<1\}$ and normalized such that $f(0)=f^{\prime}(0)-1=0$, that is
\begin{equation}\label{eq 1}
f(z)=z+\sum_{k=2}^{\infty}a_{k}z^{k},\;z\in\mathbb{D},
\end{equation}
and by $\mathcal{S}$ we denote the subclass of $\mathcal{A}$ consisting of univalent functions in the unit disc $\mathbb{D}$.

In \cite{lecko} the authors considered the {\em generalized Fekete-Szeg\H{o} functional} for the class $\mathcal{S}$ of type
\[
F_{\lambda,\mu}(f):=\left|a_{3}(f)-\lambda a_{2}(f)^{2}\right|-\mu|a_{2}(f)|,
\]
where $\lambda\in\mathbb{C}$ and $\mu>0$, and the coefficients $a_{2}(f)=a_{2}$ and $a_{3}(f)=a_{3}$ are given by \eqref{eq 1}. Therefore,
\begin{equation}\label{eq 2}
F_{\lambda,\mu}(f)=\left|a_{3}-\lambda a_{2}^{2}\right|-\mu|a_{2}|,\;\lambda\in\mathbb{C},\,\mu>0.
\end{equation}
We note that for $\mu=0$ and $0\leq\lambda\leq1$ we have the well-known {\em Fekete-Szeg\H{o} inequality} \cite{fekszeg}, that is
\begin{equation}\label{eq 3}
F_{\lambda,0}(f)=\left|a_{3}-\lambda a_{2}^{2}\right|\leq\left\{
\begin{array}{lll}
1+2\exp\left(-\frac{2\lambda}{1-\lambda}\right),&\text{if}&0\leq\lambda<1,\\
1,&\text{if}&\lambda=1.
\end{array}
\right.
\end{equation}
The above inequality is sharp for all $0\leq\lambda\leq1$, in the sense that there exists a function $f_{\lambda}\in\mathcal{S}$ such the equality holds in \eqref{eq 3}.

We emphasize that the {\em Koebe function} $k(z)=z/(1-z)^{2}$ is not the extremal function for the inequality \eqref{eq 3} for $0<\lambda<1$, but it is the extremal function whenever $\lambda\in\{0,1\}$.

The function $\phi:\mathbb{D}\times[0,\infty)\rightarrow\mathbb{C}$ is called a {\em L\"{o}wner chain} (see \cite{lowner}, \cite[p. 157]{pommer}) if $\phi(z,t)=\mathrm{e}^{it}+c_{2}(t)z^{2}+\dots$, $z\in\mathbb{D}$, is analytic in $\mathbb{D}$ for all $t\geq0$ and $\phi(z,s)\prec\phi(z,t)$ for all $0\le s\le t$, where the symbol ``$\prec$'' stands for the subordination. This notion is connected with the well-known {\em L\"{o}wner-Kufarev differential equation} introduced and studied first in \cite{lowner}, then by \cite{kufarev} and further by many authors.

Also, a {\em slit mapping} is a function that maps a domain conformally onto the complex plane minus a set of Jordan curves, while a {\em single-slit mapping} is a slit mapping that's range is the complement of a single Jordan curve.

Two important results for the L\"{o}wner chains in connection with the univalent functions and the single-slit mappings are given by the following properties:

\textsf{A.} If $f\in\mathcal{S}$, then there exists a L\"{o}wner chain $\phi(z,t)$ such that $f(z)=\phi(z,0)$, $z\in\mathbb{D}$ (see \cite[Therorem 6.1., p. 159]{pommer});

\textsf{B.} To each $f\in\mathcal{S}$ there exists a sequence of single-slit mappings $\left(\psi_{n}\right)_{n\in\mathbb{N}}$ such that $\psi_{n}\to f$ ($n\to\infty$) uniformly on each compact subset of $\mathbb{D}$ (see \cite[Theorem 3.2., p. 80]{duren}), that is the density of the slit mappings in the class $\mathcal{S}$ in the space of analytic functions in $\mathbb{D}$ with respect to the topology of uniformly convergence on compact subsets of $\mathbb{D}$.

The main results of \cite{lecko} are obtained by using the above results of the theory of L\"{o}wner chains, and are given in two next theorems.

\begin{thm}\cite[Theorem 1.1.]{lecko}\label{th-1}
For all $f\in\mathcal{S}$, $\lambda\in\mathbb{C}$, $\mu>0$, and $F_{\lambda,\mu}(f)$ is defined by \eqref{eq 2} we have
\begin{equation}\label{eq 4}
F_{\lambda,\mu}(f)\leq
\left\{\begin{array}{lll}
1,&\text{if}&|1-\lambda|\leq\frac{\mu}{2},\\
4|1-\lambda|-2\mu+1,&\text{if}&|1-\lambda|>\frac{\mu}{2}.
\end{array}
\right.
\end{equation}
Moreover, for each $f:\mathbb{D}\rightarrow\mathbb{C}$ the following implications hold:
\begin{itemize}

\item [(i)] If $|1-\lambda|\leq\frac{\mu}{2}$ and $\lambda\neq 1+\frac{\mu}{2}$, then $f\in\mathcal{S}$ and the equality in \eqref{eq 4} holds if and only if there exists $|\zeta|=1$ such that
\begin{equation}\label{eq 5}
f(z)=\frac{z}{1+\zeta^{2}z^{2}},\;z\in\mathbb{D}.
\end{equation}

\item [(ii)] If $\lambda=1+\frac{\mu}{2}$, then $f\in\mathcal{S}$ and the equality in \eqref{eq 4} holds if and only if $f$ is of the form \eqref{eq 5} or $f$ is a rotation of the Koebe function $k$;

\item [(iii)] If $|1-\lambda|>\frac{\mu}{2}$, then $f\in\mathcal{S}$ and the equality in \eqref{eq 4} holds if and only if $\lambda>1+\frac{\mu}{2}$ and $f$ is a rotation of the Koebe function $k$.
\end{itemize}
\end{thm}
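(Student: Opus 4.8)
The plan is to pass through the L\"{o}wner-chain representation of the coefficients, reduce the estimate to a one-variable optimisation, and then recover the equality cases by tracking which of the intermediate inequalities are saturated. Since $F_{\lambda,\mu}(f)$ depends on $f$ only through $a_2$ and $a_3$, which by Cauchy's formula vary continuously under locally uniform convergence, Property~B lets me assume at the outset that $f$ is a single-slit map, and Property~A (in its Herglotz/measure form) then supplies a family of probability measures $\mu_t$ on $\partial\mathbb{D}$ — a single unimodular $\kappa(t)$ in the slit case — driving the L\"{o}wner-Kufarev equation. Solving the resulting coefficient ODEs and writing $c_j(t)=\int_{\partial\mathbb{D}}\overline{x}^{\,j}\,d\mu_t(x)$ for the moments gives
\[
a_2=-2I,\qquad a_3=4I^2-2J,\qquad I:=\int_0^\infty e^{-t}c_1(t)\,dt,\ \ J:=\int_0^\infty e^{-2t}c_2(t)\,dt,
\]
so that $|I|\le1$ and $|J|\le\tfrac12$, with $|a_2|=2|I|$.

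From here the estimate is short. Writing $a_3-\lambda a_2^2=4(1-\lambda)I^2-2J$ and using the triangle inequality together with $2|J|\le1$ gives
\[
F_{\lambda,\mu}(f)\le 4|1-\lambda|\,|I|^2-2\mu|I|+1 .
\]
Setting $x:=|I|\in[0,1]$, the right-hand side is the quadratic $g(x)=4|1-\lambda|x^2-2\mu x+1$, which (being convex, or affine when $\lambda=1$) attains its maximum on $[0,1]$ at an endpoint: $g(0)=1$ and $g(1)=4|1-\lambda|-2\mu+1$, with $g(1)\ge g(0)$ exactly when $|1-\lambda|\ge\tfrac{\mu}{2}$. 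This is precisely \eqref{eq 4}, and if one prefers to argue only for slit maps, density (Property~B) transfers it to all of $\mathcal{S}$.

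For sharpness I would first check the ``if'' direction by direct substitution: for $f(z)=z/(1+\zeta^2z^2)$ one has $a_2=0$, $a_3=-\zeta^2$, so $F_{\lambda,\mu}(f)=1$; for a rotation of the Koebe function $a_2=2\eta$, $a_3=3\eta^2$ with $|\eta|=1$, so $F_{\lambda,\mu}(f)=|3-4\lambda|-2\mu$, and this equals $4|1-\lambda|-2\mu+1$ iff $4(1-\lambda)$ is a non-positive real number, i.e. $\lambda\ge1$ is real. Combined with the endpoint analysis this already explains the case split: when $|1-\lambda|<\tfrac{\mu}{2}$ only the endpoint $x=0$ is extremal (form \eqref{eq 5}); the value $\lambda=1+\tfrac{\mu}{2}$ is exactly the degenerate situation $g(0)=g(1)=1$ in which the real-alignment condition also holds, so both families occur; and for $|1-\lambda|>\tfrac{\mu}{2}$ only $x=1$ can be extremal, which the alignment condition permits solely when $\lambda>1+\tfrac{\mu}{2}$.

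The ``only if'' direction is where I expect the real work, and the obstacle is rigidity: because $F_{\lambda,\mu}$ sees only $a_2,a_3$, saturation determines at most these two coefficients, so I must argue that the extremal coefficient data actually pin down the whole function. Tracing equality backwards, the endpoint $x=1$ forces $|a_2|=2$, whence the classical Bieberbach rigidity gives that $f$ is a rotation of $k$; the endpoint $x=0$ forces $a_2=0$ together with $2|J|=1$, i.e. $|a_3|=1$, and here I would use the chain again — $|J|=\tfrac12$ forces $c_2(t)$ to have constant argument and modulus one, so each $\mu_t$ is supported on an antipodal pair $\{\pm\zeta\}$ with $\zeta^2$ fixed, and the corresponding (two-slit) L\"{o}wner chain reconstructs $f$ as $z/(1+\zeta^2z^2)$ irrespective of how the mass is distributed, the non-uniqueness merely reflecting reparametrisation of the two growing slits. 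Note that this step cannot be obtained from the density reduction, since $z/(1+\zeta^2z^2)$ is only a limit of single-slit maps; it must be run on the general representation of the original $f$. The remaining bookkeeping — matching these two rigidity outcomes against the three regimes and the alignment condition, and isolating the genuinely degenerate case $\lambda=1+\tfrac{\mu}{2}$ — then yields (i)--(iii).
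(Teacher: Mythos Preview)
Your route through the L\"owner representation is precisely the machinery of \cite{lecko} that this paper sets out to avoid. The paper's proof is elementary: write $a_3-\lambda a_2^2=(a_3-a_2^2)+(1-\lambda)a_2^2$, apply the triangle inequality, and use only $|a_3-a_2^2|\le 1$ (Lemma~\ref{lem-1}) together with $|a_2|\le 2$. Since in your notation $a_3-a_2^2=-2J$, your bound $2|J|\le 1$ is literally the same inequality, and your quadratic $g(x)=4|1-\lambda|x^2-2\mu x+1$ is the paper's $\varphi(t)=|1-\lambda|t^2-\mu t+1$ under $t=2x$; the L\"owner apparatus contributes nothing to the upper bound itself. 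For the endpoint $x=1$ both arguments invoke the same Bieberbach rigidity, and your alignment analysis $|4(1-\lambda)-1|=4|1-\lambda|+1\Leftrightarrow \lambda\ge 1$ real is exactly the paper's \eqref{eqine23}.

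The substantive difference, and the place where your argument has a genuine gap, is the rigidity at $x=0$. The paper settles it in one stroke: Lemma~\ref{lem-1} states that equality in $|a_3-a_2^2|\le 1$ holds iff $f(z)=z/(1-b\zeta z+\zeta^2 z^2)$ with $|b|\le 2$, and $a_2=0$ forces $b=0$, giving \eqref{eq 5}. Your claim that once each $\mu_t$ is supported on a fixed antipodal pair $\{\pm\zeta\}$ the chain ``reconstructs $f$ as $z/(1+\zeta^2 z^2)$ irrespective of how the mass is distributed'' is not justified and, taken literally, is false: putting all mass on one point yields a Koebe rotation, not the odd function. You then lean on the extra constraint $I=0$, but that is a single integral condition on the mass split $p(t)$ and does not by itself determine the chain; the phrase ``reparametrisation of the two growing slits'' already presupposes that $f$ has the symmetric two-slit image, which is exactly what must be proved. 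Closing this gap amounts to re-deriving the equality case of Lemma~\ref{lem-1} from L\"owner theory --- possible, but it is the hard step the present paper deliberately sidesteps by quoting the lemma.
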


\begin{thm}\cite[Theorem 1.2.]{lecko}\label{th-2}
For all $f\in\mathcal{S}$, $\lambda\in\mathbb{C}\setminus\{1\}$, $\mu>0$, and $F_{\lambda,\mu}(f)$ is defined by \eqref{eq 2} we have
\begin{equation}\label{eq 6}
F_{\lambda,\mu}(f)\geq
\left\{\begin{array}{lll}
-\frac{\mu}{\sqrt{|1-\lambda|}},&\text{if}&|1-\lambda|>\frac{\mu^{2}}{4},\\
-2\mu,&\text{if}&|1-\lambda|\leq\frac{\mu^{2}}{4}.
\end{array}
\right.
\end{equation}
Moreover, for each $f:\mathbb{D}\rightarrow\mathbb{C}$ the following implications hold:
\begin{itemize}

\item[(i)] If $|1-\lambda|>\frac{\mu^{2}}{4}$, then $f\in\mathcal{S}$ and the equality in \eqref{eq 6} holds if and only if $\lambda\leq\frac{3}{4}$ and $\lambda<1-\frac{\mu^{2}}{4}$, and there exists $|\zeta|=1$ such that
\begin{equation}\label{eq 7}
f(z)=\frac{z}{1-\frac{1}{\sqrt{|1-\lambda|}}\zeta z+\zeta^{2}z^{2}},\;z\in\mathbb{D}.
\end{equation}

\item[(ii)] If $|1-\lambda|\leq\frac{\mu^{2}}{4}$, then $f\in\mathcal{S}$ and the equality in \eqref{eq 6} holds if and only if $\lambda=\frac{3}{4}$ and $\mu\geq1$, and $f$ is a rotation of the Koebe function $k$.
\end{itemize}
\end{thm}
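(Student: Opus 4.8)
The plan is to derive the lower bound \eqref{eq 6} from two classical sharp estimates valid for \emph{every} $f\in\mathcal S$, namely $|a_2|\le 2$ and $|a_3-a_2^2|\le 1$; the latter is exactly \eqref{eq 3} evaluated at $\lambda=1$, and it also follows by applying the classical area theorem to $g(z)=1/f(1/z)=z-a_2+(a_2^2-a_3)z^{-1}+\cdots$, whose relevant coefficient is $b_1=a_2^2-a_3$. First I would write $a_3-\lambda a_2^2=(1-\lambda)a_2^2+(a_3-a_2^2)$ and apply the reverse triangle inequality with $|a_3-a_2^2|\le1$ to get $|a_3-\lambda a_2^2|\ge L|a_2|^2-1$, where $L:=|1-\lambda|$; together with the trivial bound $|a_3-\lambda a_2^2|\ge0$ this gives, with $x:=|a_2|\in[0,2]$,
\[
F_{\lambda,\mu}(f)\ge \max\bigl(0,\,Lx^2-1\bigr)-\mu x,
\]
so the problem reduces to minimizing this explicit one-variable minorant over $x\in[0,2]$.

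For the two regimes I would argue as follows. When $L>\mu^2/4$ I claim $\max(0,Lx^2-1)\ge\mu\bigl(x-1/\sqrt L\bigr)$ for all $x\ge0$: for $x\le1/\sqrt L$ the right side is $\le0$ while the left side is $\ge0$, and for $x\ge1/\sqrt L$ the difference $q(x)=Lx^2-\mu x-1+\mu/\sqrt L$ is convex with $q(1/\sqrt L)=0$ and $q'(1/\sqrt L)=2\sqrt L-\mu>0$ precisely because $L>\mu^2/4$, hence $q\ge0$; this yields $F_{\lambda,\mu}(f)\ge-\mu/\sqrt L$. When $L\le\mu^2/4$ the trivial estimate $F_{\lambda,\mu}(f)\ge-\mu|a_2|\ge-2\mu$ already gives the stated bound, and the two expressions agree at $L=\mu^2/4$, so \eqref{eq 6} holds in all cases.

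For sharpness I would first verify the ``if'' direction by substitution: for $f$ of the form \eqref{eq 7} one finds $a_2=\zeta/\sqrt L$, $a_3=(1/L-1)\zeta^2$, so $a_3-\lambda a_2^2=\bigl((1-\lambda)/L-1\bigr)\zeta^2=0$ and $|a_2|=1/\sqrt L$, giving $F_{\lambda,\mu}(f)=-\mu/\sqrt L$; likewise a rotation of the Koebe function with $\lambda=3/4$ gives $a_3-\lambda a_2^2=0$, $|a_2|=2$, hence $F_{\lambda,\mu}(f)=-2\mu$. For the ``only if'' direction in case (i) I would trace the equalities back. In this regime the minorant attains its minimum $-\mu/\sqrt L$ only at the corner $x_0=1/\sqrt L$, and only when $x_0\le2$ (i.e. $L\ge1/4$), so equality forces \emph{simultaneously} $|a_2|=1/\sqrt L$ and $a_3-\lambda a_2^2=0$. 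Then $a_3-a_2^2=(\lambda-1)a_2^2$ has modulus $L|a_2|^2=1$, so equality holds in the area theorem, collapsing $g$ to $z-a_2+b_1z^{-1}$ with $|b_1|=1$, equivalently
\[
f(z)=\frac{z}{1-a_2z+b_1z^2},\qquad |b_1|=1 .
\]
Since $f$ is univalent in $\mathbb D$, the two zeros of $1-a_2z+b_1z^2$ lie in $\{|z|\ge1\}$ while their product has modulus $1/|b_1|=1$, so both lie on $\partial\mathbb D$; writing them as $e^{i\alpha},e^{i\beta}$ shows $f$ is a rotation of $w/(1-2\cos\theta\,w+w^2)$. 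Matching $b_1=(1-\lambda)a_2^2$ against this form forces $1-\lambda=1/(4\cos^2\theta)$, which is real and $\ge1/4$, hence $\lambda\le3/4$ (with $\lambda<1-\mu^2/4$ the standing hypothesis of case (i)), and $f$ is of the form \eqref{eq 7}. Case (ii) is easier: equality needs $|a_2|=2$, so $f$ is a rotation of the Koebe function, and $a_3-\lambda a_2^2=0$ then forces $\lambda=3/4$ and $\mu\ge1$.

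The main obstacle is the equality analysis of case (i). Two points are delicate: first, recognizing that the minimizer sits exactly at the corner where the bounds $0$ and $Lx^2-1$ meet, so that equality pins down both $|a_2|$ and the vanishing of $a_3-\lambda a_2^2$ at once; and second, the rigidity step turning ``$f$ is the rational map with $|b_1|=1$'' into ``$\lambda$ is real with $\lambda\le3/4$''. Here one must exploit that univalence pushes the poles onto the unit circle, so that the extremal functions are \emph{two-slit} rotations of $w/(1-2\cos\theta\,w+w^2)$ rather than single-slit maps; it is precisely this forced reality of $1-\lambda$ that explains why equality holds only for real $\lambda\le3/4$, while the case $L<1/4$ (where $x_0>2$) shows equality is then impossible, matching the stated conditions.
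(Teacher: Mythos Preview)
Your proof is correct and takes essentially the same approach as the paper: both rest on the sharp inequality $|a_3-a_2^2|\le 1$ (the paper's Lemma~\ref{lem-1}, equivalently your area-theorem argument) combined with the reverse triangle inequality, reducing the bound to the same one-variable minimization, and both trace the extremals through the equality case of that inequality. Your equality analysis in case~(i)---deducing $a_3=\lambda a_2^2$ directly at the corner $x_0=1/\sqrt{L}$ and then forcing the reality of $1-\lambda$ via the poles-on-the-unit-circle argument---is a slight presentational variant of the paper's route through the form~\eqref{eq 9} of Lemma~\ref{lem-1} and the subsequent univalence check for~\eqref{eq 7}.
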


The main aim of this paper is to give simple proofs of the two previous theorems, and additionally, in the last section there are given sharp estimates of $F_{\lambda,\mu}(f)$ for the class of convex functions.

\section{Simple proofs of Theorem \ref{th-1} and Theorem \ref{th-2}}\label{simpleproofs}

For our proofs we will mainly use the next lemma.

\begin{lem}\cite{bieber}, \cite[Theorem 2, p. 35]{goodman}\label{lem-1}
For every function $f\in\mathcal{S}$ given by \eqref{eq 1} we have
\begin{equation}\label{eq 8}
\left|a_{3}-a_{2}^{2}\right|\leq 1,
\end{equation}
and the equality in \eqref{eq 8} holds if and only if
\begin{equation}\label{eq 9}
f(z)=\frac{z}{1-b\zeta z+\zeta^{2}z^{2}},\;z\in\mathbb{D},\;|b|\le2,\;|\zeta|=1.
\end{equation}
\end{lem}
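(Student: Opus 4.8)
The plan is to reduce the inequality to Gronwall's classical area theorem by passing from $f$ to an associated function univalent in the exterior of the unit disk. For $f\in\mathcal{S}$ given by \eqref{eq 1}, I would set
\[
g(\zeta):=\frac{1}{f(1/\zeta)},\qquad |\zeta|>1 .
\]
Since $f$ is univalent with $f(0)=0$, it has no zeros in $\mathbb{D}\setminus\{0\}$, so $g$ is well defined; moreover $g(\zeta_{1})=g(\zeta_{2})$ forces $f(1/\zeta_{1})=f(1/\zeta_{2})$ and hence $\zeta_{1}=\zeta_{2}$, so $g$ is univalent in $\{|\zeta|>1\}$ (it lies in the class usually denoted $\Sigma$). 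Expanding $1/f(1/\zeta)$ in powers of $1/\zeta$ gives
\[
g(\zeta)=\zeta-a_{2}+\frac{a_{2}^{2}-a_{3}}{\zeta}+\cdots=\zeta+\sum_{n=0}^{\infty}b_{n}\zeta^{-n},
\]
so that $b_{0}=-a_{2}$ and $b_{1}=a_{2}^{2}-a_{3}$.

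The next step is to apply the area theorem \cite{duren}: for any function of the above form univalent in $\{|\zeta|>1\}$ one has $\sum_{n\ge1}n|b_{n}|^{2}\le1$. In particular $|b_{1}|^{2}\le1$, which is exactly $|a_{3}-a_{2}^{2}|\le1$ and proves \eqref{eq 8}.

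For the equality statement I would argue as follows. If $|a_{3}-a_{2}^{2}|=1$, then $|b_{1}|=1$, and since every term of the convergent series $\sum_{n\ge1}n|b_{n}|^{2}\le1$ is nonnegative, this forces $b_{n}=0$ for all $n\ge2$. Hence $g(\zeta)=\zeta+b_{0}+b_{1}/\zeta$ with $|b_{1}|=1$, and inverting $f(z)=1/g(1/z)$ yields
\[
f(z)=\frac{z}{1+b_{0}z+b_{1}z^{2}} .
\]
Writing $b_{1}=\zeta^{2}$ with $|\zeta|=1$ and $b=-b_{0}/\zeta$ puts this into the form \eqref{eq 9}; moreover $|b|=|b_{0}|=|a_{2}|\le2$ by Bieberbach's bound \cite{bieber}, which supplies the constraint $|b|\le2$. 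Conversely, for $f$ as in \eqref{eq 9} a direct expansion gives $a_{2}=b\zeta$ and $a_{3}=(b^{2}-1)\zeta^{2}$, so $a_{3}-a_{2}^{2}=-\zeta^{2}$ and $|a_{3}-a_{2}^{2}|=1$; together with the known univalence of these functions for $|b|\le2$ this completes the characterization.

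The main obstacle I anticipate is the equality analysis rather than the inequality itself. One must justify carefully that the passage from $g$ back to $f$ recovers the exact normalization in \eqref{eq 9}, and establish the converse direction, in particular the univalence of the candidate extremal functions over the full range $|b|\le2$ (equivalently, that $g$ with $b_{n}=0$ for $n\ge2$ maps onto a slit domain). The parameter constraint $|b|\le2$ comes essentially for free from Bieberbach's estimate on $|a_{2}|$, so the delicate point is verifying that every function of the form \eqref{eq 9} indeed belongs to $\mathcal{S}$.
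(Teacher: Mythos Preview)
The paper does not supply its own proof of this lemma: it is quoted from the classical literature (Bieberbach \cite{bieber}, Goodman \cite{goodman}) and the equality characterization is attributed to \cite{lecko-part23}, so there is no in-paper argument to compare against. Your route via the inversion $g(\zeta)=1/f(1/\zeta)\in\Sigma$ and Gronwall's area theorem is exactly the classical proof found in those references, and the inequality part together with the forward direction of the equality case is carried out correctly.

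One refinement worth recording for the point you yourself flag as ``delicate''. Once you know $f(z)=z/(1+b_{0}z+b_{1}z^{2})$ with $|b_{1}|=1$, analyticity of $f$ in $\mathbb{D}$ forces both zeros of the denominator onto $|z|=1$ (their product has modulus $1$, so one zero inside would push the other outside). Writing those zeros as $e^{i\alpha},e^{i\beta}$ and picking $\zeta$ with $\zeta^{2}=b_{1}$ yields
\[
b=-\frac{b_{0}}{\zeta}=\pm 2\cos\frac{\alpha-\beta}{2},
\]
which is automatically \emph{real} with $|b|\le2$; conversely, for real $b\in[-2,2]$ the denominator factors as $(1-e^{i\gamma}\zeta z)(1-e^{-i\gamma}\zeta z)$ and univalence is immediate (this is the computation the paper alludes to later when it says ``it's easy to check that the function $f$ defined by \eqref{eq 9} is univalent in $\mathbb{D}$ whenever $|b|\le2$''). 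Note that the converse genuinely fails for complex $b$: for instance $\zeta=1$, $b=2i$ places a pole at $z=-i(\sqrt{2}-1)\in\mathbb{D}$. So your instinct that the univalence of \eqref{eq 9} over the full parameter range is the crux is exactly right, and the missing observation is simply that $b$ comes out real.
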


We would like to emphasize that the range of $\mathbb{D}$ by the above function $f$ was determined in \cite[Lemma 2.3, p. 695]{lecko-part23} and it's connected with the slit mappings, while the fact that the equality in \eqref{eq 8} holds if and only if $f$ has the form \eqref{eq 9} was also shown in \cite[Lemma 2.4, p. 696]{lecko-part23}.

As an immediately consequence of the above result, if $f\in\mathcal{S}$ is an odd function of the form \eqref{eq 1}, then $|a_{3}|\le1$ and the equality holds for the function $f$ given by \eqref{eq 9}.

\begin{proof}[Proof of Theorem \ref{th-1}]
Using Lemma \ref{lem-1} and \eqref{eq 2} we obtain
\begin{align}
F_{\lambda,\mu}(f)&=\left|\left(a_{3}-a_{2}^{2}\right)+(1-\lambda)a_{2}^{2}\right|-\mu|a_{2}|\nonumber\\
&\leq\left|a_{3}-a_{2}^{2}\right|+|1-\lambda||a_{2}|^{2}-\mu|a_{2}|\label{eq 10}\\
&\leq|1-\lambda||a_{2}|^{2}-\mu|a_{2}|+1=:\varphi\left(|a_{2}|\right),\label{eq 10i}
\end{align}
where (see also \cite{bieber})
\[
\varphi(t)=|1-\lambda|t^{2}-\mu t+1,\;t:=|a_{2}|\in[0,2].
\]

If $\lambda=1$, then $F_{\lambda,\mu}(f)=\left|a_{3}-a_2^2\right|-\mu|a_2|$ which, due to the inequality \eqref{eq 8} of Lemma \ref{lem-1} and $\mu>0$ implies $F_{\lambda,\mu}(f)\le1-\mu|a_2|$. It is easy to verify that for the univalent function $f$ given by \eqref{eq 5} we have
\[
f(z)=z-\zeta^{2}z^{3}+\dots,\;z\in\mathbb{D},
\]
thus for this function we obtain equality in the estimate.

On the other hand, if $F_{\lambda,\mu}(f)=1$ then it follows that $|a_{3}-a_2^2|=1$ and $|a_2|=0$, which according to the Lemma \ref{lem-1} is satisfied if and only if the function $f$ has the form \eqref{eq 9} with $a_{2}=0$, that is $f$ is given by \eqref{eq 5}.

If $\lambda\neq1$, then we have
\begin{equation}\label{eq 12}
F_{\lambda,\mu}(f)\le\max\left\{\varphi(0);\varphi(2)\right\}=\left\{
\begin{array}{lll}
1,&\text{if}&|1-\lambda|\leq\frac{\mu}{2},\\
4|1-\lambda|-2\mu+1,&\text{if}&|1-\lambda|\geq\frac{\mu}{2}.
\end{array}
\right.
\end{equation}

It is easy to check that the estimates \eqref{eq 4}, that are in fact \eqref{eq 12}, are reached (i.e. equalities sign hold) for the functions given in $(i)$, $(ii)$ and $(iii)$ of Theorem \ref{th-1}. Next we will show that those are the only such functions.

Let assume that equality holds in the inequality \eqref{eq 12}. This means that equalities hold in each of the inequalities \eqref{eq 10} and \eqref{eq 10i}.

\textsf{(a)} For the case $|1-\lambda|\leq\frac{\mu}{2}$ we have the equality $F_{\lambda,\mu}(f)=1$ whenever $1=\varphi(0)\ge\varphi(2)$, while $\varphi(0)=1$ if and only if $|a_{2}|=0$, hence the function $f$ given by \eqref{eq 5} satisfies the equality in \eqref{eq 12}.

\textsf{(b)} In the case $|1-\lambda|\geq\frac{\mu}{2}$ we get $F_{\lambda,\mu}(f)\leq4|1-\lambda|-2\mu+1$. Since $\varphi(t)\le\varphi(2)$ for all $t\in[0,2]$, the equality in \eqref{eq 10i} holds if and only if $|a_{2}|=2$. According to the well-known Bieberbach's result \cite{bieber} (see also \cite[Theorem 1, p. 33]{goodman}) we have $|a_{2}|=2$ if and only if $f$ is a rotation of the Koebe function, that is
\begin{equation}\label{formKoebe}
k_{\theta}(z)=\frac{z}{\left(1-\mathrm{e}^{i\theta}z\right)^{2}}=z+2\mathrm{e}^{i\theta}z^{2}+3\mathrm{e}^{2i\theta}z+\dots,\;z\in\mathbb{D},
\;\theta\in\mathbb{R}.
\end{equation}
Therefore, the condition that $f=k_{\theta}$ is a necessary one to have $F_{\lambda,\mu}(f)=4|1-\lambda|-2\mu+1$. It remains to prove that this is also a sufficient condition for having the previous equality.

Thus, if $f=k_{\theta}$, then $a_{2}=2\mathrm{e}^{i\theta}$ and $a_{3}=3\mathrm{e}^{2i\theta}$, $\theta\in\mathbb{R}$, that implies $\left|a_{3}-a_{2}^{2}\right|=1$, hence we get equality in \eqref{eq 10i}. In addition, the inequality \eqref{eq 10} will be equivalent to
\[
\left|-\mathrm{e}^{2i\theta}+4(1-\lambda)\mathrm{e}^{2i\theta}\right|-2\mu\le 4|1-\lambda|-2\mu+1
\]
or
\begin{equation}\label{eqine23}
\left|4(\lambda-1)+1\right|\le 4|\lambda-1|+1.
\end{equation}

It is obvious that for all $z_{1},z_{2}\in\mathbb{C}$ we have $\left|z_{1}+z_{2}\right|=|z_{1}|+|z_{2}|$ if and only if there exists a number $s\ge0$ such that $z_{1}=cz_{2}$. Denoting $z_{1}:=4(\lambda-1)$ and $z_{2}:=1$, it follows that we obtain equality in \eqref{eqine23} if and only if there exists $s\ge0$ for which $4(\lambda-1)=c$, that's equivalent to $\lambda\ge1$.

From the above reasons we conclude that if $|1-\lambda|\geq\frac{\mu}{2}$, then $F_{\lambda,\mu}(f)=4|1-\lambda|-2\mu+1$ if and only if $f=k_{\theta}$ and $\lambda>1$. We mention that is this last inequality we considered a strictly one because we already assumed that $\lambda\neq1$. Using the fact that $\lambda>1$, that is $\lambda$ should be a real number, we finally obtain that if $\lambda\geq1+\frac{\mu}{2}$, then $F_{\lambda,\mu}(f)=4\lambda-3-2\mu$ if and only if $f=k_{\theta}$.
\end{proof}

According to the above proof, the Theorem \ref{th-1} could be reformulated in the below slightly changed form:

\begin{thrm}\label{thrm1prime}
For all $f\in\mathcal{S}$, $\lambda\in\mathbb{C}$, $\mu>0$, and $F_{\lambda,\mu}(f)$ defined by \eqref{eq 2} we have
\begin{equation}\label{eq 4new}
F_{\lambda,\mu}(f)\leq
\left\{\begin{array}{lll}
1,&\text{if}&|1-\lambda|\leq\frac{\mu}{2},\\
4|1-\lambda|-2\mu+1,&\text{if}&|1-\lambda|\ge\frac{\mu}{2}.
\end{array}
\right.
\end{equation}
Moreover, for each $f:\mathbb{D}\rightarrow\mathbb{C}$ the following implications hold:
\begin{itemize}
\item [(i)] If $|1-\lambda|\leq\frac{\mu}{2}$, then $f\in\mathcal{S}$ and the equality in \eqref{eq 4new} holds if and only if there exists $|\zeta|=1$ such that
\[
f(z)=\frac{z}{1+\zeta^{2}z^{2}},\;z\in\mathbb{D}.
\]

\item [(ii)] If $|1-\lambda|\ge\frac{\mu}{2}$, then $f\in\mathcal{S}$ and the equality in \eqref{eq 4new} holds if and only if $1-\lambda<-\frac{\mu}{2}$ and $f$ is a rotation of the Koebe function $k$.
\end{itemize}
\end{thrm}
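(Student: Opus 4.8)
The plan is to read Theorem~1' directly off the proof of Theorem~\ref{th-1} given above, since the two statements differ only in how the boundary circle $|1-\lambda|=\frac{\mu}{2}$ is distributed among the cases. First I would observe that the estimate \eqref{eq 4new} is literally \eqref{eq 12}: replacing ``$>$'' by ``$\ge$'' in the second branch is harmless, because on the circle $|1-\lambda|=\frac{\mu}{2}$ one has $\varphi(0)=1=4|1-\lambda|-2\mu+1=\varphi(2)$, so both branches agree there. Thus the upper bound needs no new work.

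For the equality statements I would rerun the two equality analyses from the proof of Theorem~\ref{th-1}. In part~(i), where $|1-\lambda|\le\frac{\mu}{2}$, the vertex of $\varphi$ lies at $t^{*}=\frac{\mu}{2|1-\lambda|}\ge1$ (the degenerate linear case $\lambda=1$ being handled as in the proof above), so $\max_{[0,2]}\varphi=\varphi(0)=1$; forcing equality through \eqref{eq 10}--\eqref{eq 10i} then yields $\left|a_{3}-a_{2}^{2}\right|=1$ together with $\varphi(|a_{2}|)=1$, and in the \emph{strict} range $|1-\lambda|<\frac{\mu}{2}$ the value $1$ is attained only at $t=0$, whence $a_{2}=0$ and, by Lemma~\ref{lem-1}, $f$ is of the form \eqref{eq 5}. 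In part~(ii), where $|1-\lambda|\ge\frac{\mu}{2}$ so that $t^{*}\le1$, the maximum is $\varphi(2)$, attained only at $|a_{2}|=2$; by Bieberbach's theorem \cite{bieber} $f$ must be a rotation $k_{\theta}$, and the residual equality in \eqref{eq 10} is exactly the alignment condition \eqref{eqine23}, which forces $\lambda>1$. Combined with $|1-\lambda|\ge\frac{\mu}{2}$ this gives $\lambda>1+\frac{\mu}{2}$, that is $1-\lambda<-\frac{\mu}{2}$.

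The one genuinely delicate point---and exactly what the word ``slightly'' is flagging---is the single value $\lambda=1+\frac{\mu}{2}$ on the circle. There $\varphi(0)=\varphi(2)=1$, so $\varphi$ attains its maximum at \emph{both} endpoints, and hence both \eqref{eq 5} (via $|a_{2}|=0$) \emph{and} a Koebe rotation $k_{\theta}$ (via $|a_{2}|=2$, which does satisfy \eqref{eqine23} since here $\lambda>1$) realize equality---precisely the double extremal recorded in part~(ii) of Theorem~\ref{th-1}. Consequently, read literally, the ``only if'' in part~(i) of Theorem~1' fails at this one point, because the Koebe rotation also gives equality there. I would therefore make explicit that the reformulation adopts the two closed cases $|1-\lambda|\le\frac{\mu}{2}$ and $|1-\lambda|\ge\frac{\mu}{2}$, assigns the point $\lambda=1+\frac{\mu}{2}$ to part~(i) where it is represented by \eqref{eq 5}, and uses the strict inequality $1-\lambda<-\frac{\mu}{2}$ in part~(ii) to exclude it; the sole substantive difference from Theorem~\ref{th-1} is that the second extremal $k_{\theta}$ at this boundary point is absorbed into, and represented by, part~(i). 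Apart from tracking this boundary, every line is a verbatim consequence of the computations already performed.
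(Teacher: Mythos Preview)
Your proposal is correct and takes exactly the approach the paper intends: the paper gives no separate proof of Theorem~1' at all, merely prefacing it with ``According to the above proof, the Theorem~\ref{th-1} could be reformulated in the below slightly changed form,'' so deriving it line by line from the proof of Theorem~\ref{th-1} is precisely what is being asked. Your tracking of the boundary value $\lambda=1+\frac{\mu}{2}$ is in fact more careful than the paper's own treatment---you correctly note that at this single point the Koebe rotation is also extremal, so the ``only if'' clauses of Theorem~1' are, read literally, slightly imprecise there, a wrinkle the paper does not flag.
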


\begin{rem}\label{remfirst}
Since $|a_{2}|\leq2$ for all $f\in\mathcal{S}$, then $\frac{1}{2}|a_{2}|\leq1$. From the definition formula \eqref{eq 2}, using the triangle's inequality together with the Fekete-Szeg\H{o} theorem \cite{fekszeg} given by \eqref{eq 3} we obtain
\begin{align}
F_{\lambda,\mu}(f)&=\left|a_{3}-\lambda a_{2}^{2}\right|-\mu|a_{2}|\leq\left|a_{3}-\lambda a_{2}^{2}\right|-\mu|a_{2}|\cdot\frac{1}{2}|a_{2}|\nonumber\\
&\leq\left|a_{3}-\left(\lambda+\frac{\mu}{2}\right)a_{2}^{2}\right|\nonumber\\
&\leq\left\{
\begin{array}{lll}
1+2\exp\left(-\frac{2(\lambda+\mu/2)}{1-(\lambda+\mu/2)}\right),&\text{if}&0\leq\lambda+\frac{\mu}{2}<1,\label{eq 13}\\
1,&\text{if}&\lambda+\frac{\mu}{2}=1.
\end{array}
\right.
\end{align}
We emphasize that the estimate \eqref{eq 13} is better than the one that follows from Theorem \ref{th-1} in the case $0\leq\lambda+\frac{\mu}{2}<1$, like we will show below.

Indeed, having in mind that the assumption $0\leq\lambda+\frac{\mu}{2}<1$ implies $1-\lambda>\frac{\mu}{2}$, the inequality
\[
1+2\exp\left(-\frac{2(\lambda+\mu/2)}{1-(\lambda+\mu/2)}\right)<4|1-\lambda|-2\mu+1
\]
is equivalent to
\[
\exp\left(-\frac{2(\lambda+\mu/2)}{1-(\lambda+\mu/2)}\right)<2\left[1-\left(\lambda+\frac{\mu}{2}\right)\right],
\]
which after substitution $1-\left(\lambda+\frac{\mu}{2}\right)=\frac{1}{t}$, $t\geq 1$ \big(since $0\leq\lambda+\frac{\mu}{2}<1$\big), becomes
\[
\frac{2 e^{2t}}{e^{2}}-t>0,\;t\in[1,+\infty).
\]
According to the first derivative test it's easy to check that the above inequality holds that proves the required conclusion.
\end{rem}

\begin{proof}[Proof of Theorem \ref{th-2}]
In the beginning, let note that for every $f\in\mathcal{S}$ we have $|a_{2}|\leq2$ (see \cite{bieber} or \cite[Theorem 1, p. 33]{goodman}), hence
\begin{equation}\label{eq-14}
F_{\lambda,\mu}(f)=\left|a_{3}-\lambda a_{2}^{2}\right|-\mu|a_{2}|\geq-\mu|a_{2}|\ge-2\mu,
\end{equation}
which proves that the inequality \eqref{eq-14} holds for all $f\in\mathcal{S}$.

It's obvious that for the case $|1-\lambda|\leq\frac{\mu^{2}}{4}$, if $\lambda=\frac{3}{4}$ and $\mu\geq1$ and $f$ is a rotation of the Koebe function $k$, then $f\in\mathcal{S}$ and by using the power series expansion \eqref{formKoebe} we get the equality $F_{\lambda,\mu}(f)=-2\mu$.

Reversely, a sufficient condition to have equality in \eqref{eq 6}, more exactly in \eqref{eq-14}, is that $|a_{2}|=2$. From the previously used Bieberbach's result, $|a_{2}|=2$ if and only if $f$ is rotation of the Koebe function, that is $f=k_{\theta}$ where $k_{\theta}$ is given by \eqref{formKoebe}. Thus, if $f=k_{\theta}$ then
\[
F_{\lambda,\mu}(f)=\left|a_{3}-\lambda a_{2}^{2}\right|-\mu|a_{2}|=|4\lambda-3|-2\mu,
\]
so $F_{\lambda,\mu}(f)=-2\mu$ is equivalent to $\lambda=\frac{3}{4}$. This last inequality combined with $|1-\lambda|\le\frac{\mu^{2}}{4}$ lead us to $\mu\ge1$, thus the proof of the item $(ii)$ is complete.

Let's now study the case $|1-\lambda|\geq\frac{\mu^{2}}{4}$. Therefore, the inequality we should prove is the first part of \eqref{eq 6} that is equivalent to
\begin{equation}\label{eq 15}
\left|a_{3}-\lambda a_{2}^{2}\right|\geq\mu\left(|a_{2}|-\frac{1}{\sqrt{|1-\lambda|}}\right).
\end{equation}

If $|a_{2}|\le\frac{1}{\sqrt{|1-\lambda|}}$, then \eqref{eq 15} obviously holds. Considering the case $\frac{1}{\sqrt{|1-\lambda|}}\leq|a_{2}|\leq2$, from the well-known inequality for complex numbers $\left|z_{1}+z_{2}\right|\geq\big|\left|z_{1}\right|-\left|z_{2}\right|\big|$ and Lemma \ref{lem-1} we get
\begin{align}
\left|a_{3}-\lambda a_{2}^{2}\right|&=\left|\left(a_{3}-a_{2}^{2}\right)+(1-\lambda)a_{2}^{2}\right|
\geq|1-\lambda||a_{2}|^{2}-\left|a_{3}-a_{2}^{2}\right|\nonumber\\
&\geq|1-\lambda||a_{2}|^{2}-1.\label{ineqi}
\end{align}
Consequently, to prove the inequality \eqref{eq 15} it is enough to show that
\begin{equation}\label{eqninis}
|1-\lambda||a_{2}|^{2}-1\geq\mu\left(|a_{2}|-\frac{1}{\sqrt{|1-\lambda|}}\right),
\end{equation}
or equivalently,
\[
|1-\lambda|\left(|a_{2}|^{2}-\frac{1}{|1-\lambda|}\right)-\mu\left(|a_{2}|-\frac{1}{\sqrt{|1-\lambda|}}\right)\geq0,
\]
that is
\begin{equation}\label{eq 18}
\left(|a_{2}|-\frac{1}{\sqrt{|1-\lambda|}}\right)\cdot
\left[|1-\lambda|\left(|a_{2}|+\frac{1}{\sqrt{|1-\lambda|}}\right)-\mu\right]\geq0.
\end{equation}
The inequality \eqref{eq 18} is true since by assumption $|a_{2}|-\frac{1}{\sqrt{|1-\lambda|}}\geq0$, and
\begin{equation}\label{instrict}
|1-\lambda|\left(|a_{2}|+\frac{1}{\sqrt{|1-\lambda|}}\right)-\mu\geq|1-\lambda|\frac{2}{\sqrt{|1-\lambda|}}-\mu
=2\left(\sqrt{|1-\lambda|}-\frac{\mu}{2}\right)>0.
\end{equation}

Now we will prove the item $(i)$, that is if $|1-\lambda|>\frac{\mu^{2}}{4}$, then $f\in\mathcal{S}$ and the equality in \eqref{eq 6} holds if and only if $\lambda\leq\frac{3}{4}$ and $\lambda<1-\frac{\mu^{2}}{4}$, and there exists $|\zeta|=1$ such that $f$ has the form \eqref{eq 7}.

\textsf{(a)} If $f$ has the form \eqref{eq 7}, then
\[
a_{2}=\frac{\zeta}{\sqrt{{|\lambda-1|}}},\;a_{3}=\zeta^{2}\left(\frac{1}{{|\lambda-1|}}-1\right),\;|\zeta|=1.
\]
It follows that
\[
F_{\lambda,\mu}(f)=\left|a_{3}-\lambda a_{2}^{2}\right|-\mu|a_{2}|=\frac{\big||\lambda-1|+\lambda-1|\big|}{|\lambda-1|}-\frac{\mu}{\sqrt{|\lambda-1|}},
\;\lambda\in\mathbb{C}\setminus\{1\},
\]
hence
\begin{gather}
F_{\lambda,\mu}(f)=\frac{\big||\lambda-1|+\lambda-1|\big|}{|\lambda-1|}-\frac{\mu}{\sqrt{|\lambda-1|}}=-\frac{\mu}{\sqrt{|\lambda-1|}}\Leftrightarrow
|\lambda-1|+\lambda-1=0\Leftrightarrow\nonumber\\
1-\lambda=|1-\lambda|>0,\;\lambda\in\mathbb{C}\setminus\{1\}\Leftrightarrow\lambda<1.\label{eqequa}
\end{gather}

It's easy to check that the function $f$ defined by \eqref{eq 9} is univalent in $\mathbb{D}$ whenever $|b|\le2$, but if $|b|>2$ this function is not analytic in $\mathbb{D}$ because it has at least one pole in $\mathbb{D}$. Therefore, for the function $f$ given by \eqref{eq 7} we deduce that it is univalent in $\mathbb{D}$ if and only if
\[
\frac{1}{\sqrt{|1-\lambda|}}\le2,
\]
and using \eqref{eqequa} the above inequality is equivalent to $\lambda\le\frac{3}{4}$. Consequently, the assumption $|1-\lambda|>\frac{\mu^{2}}{4}$ becomes $\lambda<1-\frac{\mu^{2}}{4}$, and one direction of the equivalence is proved.

\textsf{(b)} Assume that $f\in\mathcal{S}$ and the equality in \eqref{eq 6} holds. A necessary and sufficient condition for having this equality is those for which we have equality in \eqref{eqninis}. According to \eqref{instrict} and \eqref{eq 18}, the equality in \eqref{eqninis} holds if and only if $|a_{2}|=\frac{1}{\sqrt{|1-\lambda|}}$.

Since the equality holds in \eqref{ineqi} in and only if $\left|a_{3}-a_{2}^{2}\right|=1$ and using Lemma \ref{lem-1} this implies that $f$ has the form \eqref{eq 9}, consequently $f$ will have the form \eqref{eq 7}. Using the previous reasons we deduce $\lambda\le\frac{3}{4}$ and $\lambda<1-\frac{\mu^{2}}{4}$ and the fact that this function gives the required equality.
\end{proof}

\begin{rem}\label{remthm2}
From the above proof of the Theorem \ref{th-2} we get that for all $f\in\mathcal{S}$, $\lambda\in\mathbb{C}$, and $\mu>0$ we have
\[
F_{\lambda,\mu}(f)\geq-2\mu,
\]
while if $\lambda\in\mathbb{C}\setminus\{0\}$ and $|1-\lambda|\geq\frac{\mu^{2}}{4}$, then
\[
F_{\lambda,\mu}(f)\geq-\frac{\mu}{\sqrt{|1-\lambda|}}.
\]
Comparing the above two lower bounds for $F_{\lambda,\mu}(f)$, since
\[
\max\left\{-2\mu;-\frac{\mu}{\sqrt{|1-\lambda|}}\right\}=\left\{
\begin{array}{lll}
-\frac{\mu}{\sqrt{|1-\lambda|}},&\text{if}&|1-\lambda|\ge\frac{1}{4},\\
-2\mu,&\text{if}&|1-\lambda|\le\frac{1}{4},
\end{array}
\right.
\]
we conclude that for all $f\in\mathcal{S}$, $\lambda\in\mathbb{C}$, and $\mu>0$, the next inequality holds
\[
F_{\lambda,\mu}(f)\geq
\left\{\begin{array}{lll}
-\frac{\mu}{\sqrt{|1-\lambda|}},&\text{if}&|1-\lambda|\ge\max\left\{\frac{\mu^{2}}{4};\frac{1}{4}\right\}\\
-2\mu,&&\text{otherwise},
\end{array}
\right.
\]
and gives a more comprehensive estimation for $F_{\lambda,\mu}(f)$ than \eqref{eq 4new}.
\end{rem}

\section{The corresponding estimations for the class $\mathcal{K}$}\label{sectclassK}

It's well-known that an analytic function in $\mathbb{D}$ is called to be a {\em convex function} if it is univalent in $\mathbb{D}$ and the range $f(\mathbb{D})$ is a convex domain (see \cite{study}, \cite[p. 44]{pommer}). We denote by $\mathcal{K}$ the class of convex and normalized functions $f$ by $f(0)=f^{\prime}(0)-1=0$, hence it's evident that $\mathcal{K}\subsetneq\mathcal{S}$, and the analytic equivalent characterization of the class of convex functions $f$ first stated in \cite{study} is $f^{\prime}(0)\ne0$ and $\operatorname{Re}\left(1+\frac{zf^{\prime\prime}(z)}{f^{\prime}(z)}\right)>0$ in $\mathbb{D}$ (see, also \cite[Theorem 1, p. 111]{goodman}, \cite[Theorem 2.7., p. 44]{pommer}). More information about these classes can be found in \cite{pommer, duren}, etc.

To obtain the similar results for the class $\mathcal{K}$ we need the next lemma given by Trimble in \cite{trimble}.

\begin{lem}\label{lem-2}
For all functions $f(z)=z+a_2z^2+a_{3}z^3+\dots$ from $\mathcal{K}$,
\begin{equation}\label{eq 19}
\left|a_{3}-a_2^{2}\right|\leq\frac{1}{3}\left(1-|a_2|^2\right).
\end{equation}
The inequality is sharp with extremal function
\begin{equation}\label{eq 20}
f_{\alpha}(z)=\int_0^z\left(\frac{1+t}{1-t}\right)^{\alpha}\frac{1}{1-t^2}\,\mathrm{d}t
=z+\alpha z^2+\frac{1}{3}\left(2\alpha^2+1\right)z^3+\dots,\;z\in\mathbb{D},
\end{equation}
and $0\le\alpha\leq1$.
\end{lem}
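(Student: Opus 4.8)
The final statement is Lemma \ref{lem-2} (Trimble's lemma), asserting that for convex normalized $f$ one has $\left|a_3-a_2^2\right|\le\frac13\left(1-|a_2|^2\right)$, with equality for the family $f_\alpha$ in \eqref{eq 20}. The plan is to exploit the analytic characterization of convexity, namely $\operatorname{Re}\left(1+\frac{zf''(z)}{f'(z)}\right)>0$, to transfer the problem to the Carath\'{e}odory class $\mathcal{P}$ of functions with positive real part. Concretely, I would write $1+\frac{zf''(z)}{f'(z)}=p(z)$ where $p(0)=1$ and $\operatorname{Re}p>0$, so that $p(z)=1+c_1 z+c_2 z^2+\cdots$ with the classical coefficient bounds $|c_1|\le 2$ and $\left|c_2-\tfrac12 c_1^2\right|\le 2-\tfrac12|c_1|^2$ (the sharpened second-coefficient inequality for Carath\'{e}odory functions). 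The first step is therefore to expand the logarithmic-derivative relation in power series and solve for $a_2,a_3$ in terms of $c_1,c_2$.

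Carrying out the expansion, comparing coefficients of $z$ and $z^2$ in $zf''(z)=(p(z)-1)f'(z)$ yields $a_2=\tfrac12 c_1$ and $a_3=\tfrac16\left(c_2+\tfrac12 c_1^2\right)$ (I would verify these constants rather than trust them). Substituting into the target expression gives $a_3-a_2^2=\tfrac16 c_2+\tfrac{1}{12}c_1^2-\tfrac14 c_1^2=\tfrac16\left(c_2-\tfrac12 c_1^2\right)$, so the quantity to bound is exactly $\tfrac16\left|c_2-\tfrac12 c_1^2\right|$. Applying the sharpened Carath\'{e}odory estimate $\left|c_2-\tfrac12 c_1^2\right|\le 2-\tfrac12|c_1|^2$ together with $|a_2|=\tfrac12|c_1|$, i.e.\ $|c_1|=2|a_2|$, converts the right-hand side into $\tfrac16\left(2-2|a_2|^2\right)=\tfrac13\left(1-|a_2|^2\right)$, which is precisely \eqref{eq 19}.

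For sharpness I would take the candidate family $f_\alpha$ in \eqref{eq 20}, whose reported expansion gives $a_2=\alpha$ and $a_3=\tfrac13\left(2\alpha^2+1\right)$. A direct computation then shows $a_3-a_2^2=\tfrac13\left(2\alpha^2+1\right)-\alpha^2=\tfrac13\left(1-\alpha^2\right)=\tfrac13\left(1-|a_2|^2\right)$, so equality holds; one should also confirm $f_\alpha\in\mathcal{K}$ for $0\le\alpha\le1$ by checking that $1+\frac{zf_\alpha''(z)}{f_\alpha'(z)}=\frac{1+(2\alpha-1)z+z^2}{1-z^2}$ has positive real part, which follows since its boundary values have real part $\frac{\alpha(1+\cos\theta)}{\text{(nonnegative)}}\ge0$.

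The main obstacle is obtaining the correct sharpened second-coefficient inequality for the Carath\'{e}odory class and matching its extremal configuration to the extremal family $f_\alpha$: the plain bound $|c_2|\le 2$ is too weak to produce the factor $1-|a_2|^2$, so the refined estimate $\left|c_2-\tfrac12 c_1^2\right|\le 2-\tfrac12|c_1|^2$ is essential, and I would need to invoke or re-derive it (it follows from the standard parametric representation $c_n=2\int e^{-in\theta}\,d\mu(\theta)$ of $\mathcal{P}$). Verifying that equality in both the Carath\'{e}odory inequality and in the passage through $|a_2|=\tfrac12|c_1|$ is simultaneously achieved by the chosen $\mu$ — and hence by exactly the functions $f_\alpha$ — is the delicate bookkeeping step, but it is routine once the representation formula is in hand.
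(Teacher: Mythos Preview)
The paper does not prove this lemma at all; it is simply quoted from Trimble \cite{trimble}. Your approach via the Carath\'eodory class is essentially Trimble's original argument, and it is correct in outline and in its final conclusions.

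Two small computational slips to repair when you write it up. First, comparing coefficients in $zf''(z)=(p(z)-1)f'(z)$ actually gives $6a_3=c_2+2a_2c_1=c_2+c_1^{2}$, i.e.\ $a_3=\tfrac16\bigl(c_2+c_1^{2}\bigr)$, not $\tfrac16\bigl(c_2+\tfrac12 c_1^{2}\bigr)$; with the correct value one still gets
\[
a_3-a_2^{2}=\tfrac16 c_2+\tfrac16 c_1^{2}-\tfrac14 c_1^{2}=\tfrac16\bigl(c_2-\tfrac12 c_1^{2}\bigr),
\]
so the remainder of the argument is unaffected (your two arithmetic errors happened to cancel). Second, for the convexity check one finds
\[
1+\frac{zf_\alpha''(z)}{f_\alpha'(z)}=\frac{1+2\alpha z+z^{2}}{1-z^{2}},
\]
not $1+(2\alpha-1)z+z^{2}$ in the numerator. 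A cleaner way to see that this has positive real part for $0\le\alpha\le1$ is to observe that it equals the convex combination $\alpha\cdot\dfrac{1+z}{1-z}+(1-\alpha)\cdot\dfrac{1+z^{2}}{1-z^{2}}$ of two functions in $\mathcal{P}$, which avoids the boundary computation.
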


Note that the range of $\alpha$ for the extremal function exploits the fact that if $f\in\mathcal{K}$ then $|a_{n}|\leq 1$, $n=2,3,\ldots$.

\begin{thm}\label{thmK}
For all $f\in\mathcal{K}$, $\lambda\in\mathbb{C}$, $\mu>0$ and $F_{\lambda,\mu}(f)$ is defined by \eqref{eq 2}, we have:
\begin{itemize}

\item[($i$)] If $|1-\lambda|\leq\frac{1}{3}$, then $F_{\lambda,\mu}(f)\leq\frac{1}{3}$. The equality holds if and only if $f$ has the form given in \eqref{eq 20} for $\alpha=0$, i.e. for the function $f_{0}(z)=z+\frac{1}{3}z^{3}+\dots$, $z\in\mathbb{D}$.

\item[($ii$)] If $|1-\lambda|>\frac{1}{3}$, then
\[
F_{\lambda,\mu}(f)\leq\left\{
\begin{array}{lll}
\frac{1}{3},&\text{if}&\frac{1}{3}<|1-\lambda|\leq\frac{1}{3}+\mu,\\
|1-\lambda|-\mu,&\text{if}&\frac{1}{3}+\mu\geq|1-\lambda|.
\end{array}
\right.
\]
Both of these estimations are sharp, since the equality holds in the first case for $a_2=0$ and for $a_2=1$ in the second case, i.e. for the functions $f_{0}$ and $f_{1}$ defined by \eqref{eq 20}, respectively.
\end{itemize}
\end{thm}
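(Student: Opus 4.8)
The plan is to mimic the proof of Theorem \ref{th-1}, but with Bieberbach's estimate $|a_3-a_2^2|\le1$ replaced by Trimble's sharper bound from Lemma \ref{lem-2}. Writing $a_3-\lambda a_2^2=(a_3-a_2^2)+(1-\lambda)a_2^2$, applying the triangle inequality and then \eqref{eq 19}, I would obtain
\begin{align*}
F_{\lambda,\mu}(f)
&\le\left|a_3-a_2^2\right|+|1-\lambda||a_2|^2-\mu|a_2|\\
&\le\frac{1}{3}\left(1-|a_2|^2\right)+|1-\lambda||a_2|^2-\mu|a_2|=:\psi\left(|a_2|\right),
\end{align*}
where $\psi(t)=\frac{1}{3}+\left(|1-\lambda|-\frac{1}{3}\right)t^2-\mu t$. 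The decisive structural difference from the class $\mathcal{S}$ is that here $t=|a_2|$ ranges only over $[0,1]$ (by the remark after Lemma \ref{lem-2}, every $f\in\mathcal{K}$ has $|a_2|\le1$), so the whole problem reduces to maximizing this quadratic over $[0,1]$.

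The second step is a case split governed by the sign of the leading coefficient $|1-\lambda|-\frac{1}{3}$. When $|1-\lambda|\le\frac{1}{3}$ this coefficient is non-positive, hence $\psi(t)\le\frac{1}{3}-\mu t\le\frac{1}{3}$ on $[0,1]$, and since $\mu>0$ the value $\frac{1}{3}$ is reached only at $t=0$; this gives item ($i$). When $|1-\lambda|>\frac{1}{3}$ the parabola opens upward, so its maximum over the compact interval $[0,1]$ sits at an endpoint, and I would merely compare $\psi(0)=\frac{1}{3}$ with $\psi(1)=|1-\lambda|-\mu$. The inequality $\frac{1}{3}\ge|1-\lambda|-\mu$ is equivalent to $|1-\lambda|\le\frac{1}{3}+\mu$, which partitions item ($ii$) into the two stated regimes (the second threshold there being read as $|1-\lambda|\ge\frac{1}{3}+\mu$).

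For sharpness I would use the two witnesses from \eqref{eq 20}. The function $f_0$ has $a_2=0$ and $a_3=\frac{1}{3}$, so $F_{\lambda,\mu}(f_0)=|a_3|=\frac{1}{3}$, matching the first bound; the function $f_1$ has $a_2=1$ and $a_3=1$, whence $a_3-\lambda a_2^2=1-\lambda$ and $F_{\lambda,\mu}(f_1)=|1-\lambda|-\mu$, matching the second. For the ``if and only if'' in item ($i$) I would trace back the equalities: $\psi(|a_2|)=\frac{1}{3}$ forces $|a_2|=0$ (the $t^2$-term being non-positive while $-\mu t$ is strictly decreasing), after which equality in \eqref{eq 19} at $a_2=0$ gives $|a_3|=\frac{1}{3}$, and the extremal description of Lemma \ref{lem-2} with $\alpha=0$ identifies $f$ as $f_0$.

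The delicate point will be the equality discussion rather than the inequality, which is routine. Since Lemma \ref{lem-2} records $f_\alpha$ only as a sharp extremal family for real $\alpha$, the cleanest justification of the ``only if'' direction in ($i$) is that equality in \eqref{eq 19} together with the forced value $|a_2|=0$ drives $f$ into the extremal configuration of that lemma at $\alpha=0$; I would also note that the rotations $\mathrm{e}^{-i\theta}f(\mathrm{e}^{i\theta}z)$ preserve $|a_2|$ and $|a_3|$, so the extremals are strictly the rotations of $f_0$, with $f_0$ serving as the canonical representative. In item ($ii$) only sharpness is claimed, not a full characterization, so exhibiting $f_0$ and $f_1$ suffices and no further subtlety arises.
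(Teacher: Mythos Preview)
Your proposal is correct and follows essentially the same approach as the paper: the same decomposition $a_3-\lambda a_2^2=(a_3-a_2^2)+(1-\lambda)a_2^2$, the triangle inequality, Trimble's Lemma~\ref{lem-2}, the identical quadratic $\psi$ on $[0,1]$, the same case split on the sign of $|1-\lambda|-\tfrac{1}{3}$, and the same extremal functions $f_0$ and $f_1$. Your additional care in flagging that Lemma~\ref{lem-2} only names an extremal family (rather than a full ``if and only if'' characterization) is a fair caveat that the paper itself glosses over in the same way.
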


\begin{proof}
Similarly as in the proof of Theorem \ref{th-1}, form the triangle's inequality and using the inequality \eqref{eq 19} of Lemma \ref{lem-2} we have
\begin{align}
F_{\lambda,\mu}(f)&=\left|\left(a_{3}-a_{2}^{2}\right)+(1-\lambda)a_{2}^{2}\right|-\mu|a_{2}|\leq
\left|a_{3}-a_{2}^{2}\right|+|1-\lambda||a_{2}|^{2}-\mu|a_{2}|\nonumber\\
&\leq\frac{1}{3}\left(1-|a_{2}|^{2}\right)+|1-\lambda||a_{2}|^{2}-\mu|a_{2}|\label{inqdfk}\\
&=\left(|1-\lambda|-\frac{1}{3}\right)|a_{2}|^{2}-\mu|a_{2}|+\frac{1}{3}=:\psi_{1}\left(|a_{2}|\right),\label{eq 21}
\end{align}
where
\begin{equation}\label{eq 22}
\psi_{1}(t)=\left(|1-\lambda|-\frac{1}{3}\right)t^{2}-\mu t+\frac{1}{3},\;0\leq t\leq1,
\end{equation}
and we used that $|a_2|\le1$ for $f\in\mathcal{K}$ (see \cite{lowner}, \cite[Theorem 7, p. 117]{goodman}).

If $|1-\lambda|\le\frac{1}{3}$, since $|1-\lambda|-\frac{1}{3}\le0$ and $-\mu<0$, from \eqref{eq 21} and \eqref{eq 22} it follows that $F_{\lambda,\mu}(f)\leq\psi_{1}\left(|a_{2}|\right)\leq\psi_{1}(0)=\frac{1}{3}$. The last of the previous inequalities becomes equality if and only if $|a_{2}|=0$, while in view of Lemma \ref{lem-2} the inequality \eqref{inqdfk} becomes equality if and only if the $f=f_{\alpha}$ defined by \eqref{eq 20}, therefore
\[
f(z)=f_{0}(z)=\int_{0}^{z}\frac{1}{1-t^{2}}\,\mathrm{d}t=z+\frac{1}{3}z^{3}+\sum_{k=2}^{\infty}\frac{z^{2k+1}}{2k+1},\;z\in\mathbb{D},
\]
hence $F_{\lambda,\mu}(f)=\frac{1}{3}$ if and only if $f=f_{0}$.

If $|1-\lambda|>\frac{1}{3}$, then since $\psi_{1}(0)=\frac{1}{3}$ and $\psi_{1}(1)=|1-\lambda|-\mu$, we have
\[
F_{\lambda,\mu}(f)\leq\max\left\{\frac{1}{3};|1-\lambda|-\mu\right\}
=\left\{\begin{array}{lll}
\frac{1}{3},&\text{if}&\frac{1}{3}<|1-\lambda|\leq\frac{1}{3}+\mu,\\
|1-\lambda|-\mu,&\text{if}&\frac{1}{3}+\mu\geq|1-\lambda|.
\end{array}
\right.
\]
Using the above power series expansion of $f_{0}$ and the fact that
\[
f_{1}(z)=\int_{0}^{z}\frac{1}{(1-t)^{2}}\,\mathrm{d}t=z+\sum_{k=2}^{\infty}z^{k},\;z\in\mathbb{D},
\]
it follows that the equality holds if the above two cases for the functions $f_{0}$ and $f_{1}$ defined \eqref{eq 20}, respectively.
\end{proof}

\begin{thm}\label{thrmK2}
Let $F_{\lambda,\mu}(f)$ is defined by \eqref{eq 2}, $\lambda\in\mathbb{C}$, and $\mu>\frac23$. Then, for every function $f\in\mathcal{K}$ we have
\begin{equation}\label{estimthmK2}
F_{\lambda,\mu}(f)\geq\left\{\begin{array}{lll}
-\frac{1}{3}-\frac{\mu^{2}}{4\left(|1-\lambda|+\frac{1}{3}\right)},&\text{if}&|1-\lambda|\geq-\frac{1}{3}+\frac{\mu}{2},\\
|1-\lambda|-\mu,&\text{if}&|1-\lambda|\leq-\frac{1}{3}+\frac{\mu}{2}.
\end{array}
\right.
\end{equation}
Both of these results are sharp, as follows:

$(i)$ The equality holds in the first case for the function $f_{t_{0}}$ defined by \eqref{eq 20} for $\alpha=t_{0}$ where $t_{0}=\frac{\mu}{2\left(|1-\lambda|+\frac{1}{3}\right)}$, under the conditions
$\frac{1}{3}-\frac{3}{4}\mu^{2}\leq 1-\lambda\leq\frac{1}{3}-\frac{1}{2}\mu$, $\mu>\frac{2}{3}$.

$(ii)$ The equality holds in the second case for the function $f_{1}$ defined by \eqref{eq 20} for $\alpha=1$ or for its rotations.
\end{thm}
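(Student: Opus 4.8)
The plan is to follow exactly the strategy used in the proof of Theorem \ref{th-2}, but with the convex-class estimate \eqref{eq 19} of Lemma \ref{lem-2} replacing Lemma \ref{lem-1}, and with the sharp bound $|a_2|\le1$ (valid on $\mathcal{K}$) replacing $|a_2|\le2$. Writing $a_3-\lambda a_2^2=(a_3-a_2^2)+(1-\lambda)a_2^2$ and applying the reverse triangle inequality $|u+v|\ge|v|-|u|$ together with \eqref{eq 19}, I would first obtain
\[
\left|a_3-\lambda a_2^2\right|\ge|1-\lambda||a_2|^2-\left|a_3-a_2^2\right|\ge|1-\lambda||a_2|^2-\tfrac13\left(1-|a_2|^2\right),
\]
and hence
\[
F_{\lambda,\mu}(f)\ge\left(|1-\lambda|+\tfrac13\right)|a_2|^2-\mu|a_2|-\tfrac13=:\psi_2\left(|a_2|\right),
\]
where $\psi_2(t)=\left(|1-\lambda|+\frac13\right)t^2-\mu t-\frac13$ is considered on $[0,1]$.

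The next step is to minimize $\psi_2$ over $[0,1]$. Since its leading coefficient is positive, $\psi_2$ is an upward parabola with vertex at $t_0=\frac{\mu}{2\left(|1-\lambda|+1/3\right)}$. If $t_0\le1$, equivalently $|1-\lambda|\ge-\frac13+\frac{\mu}{2}$, the minimum over $[0,1]$ is attained at $t_0$ and equals $-\frac13-\frac{\mu^2}{4(|1-\lambda|+1/3)}$; if $t_0>1$, equivalently $|1-\lambda|<-\frac13+\frac{\mu}{2}$, then $\psi_2$ is decreasing on $[0,1]$ and the minimum is $\psi_2(1)=|1-\lambda|-\mu$. The two expressions coincide on the boundary $|1-\lambda|=-\frac13+\frac{\mu}{2}$, so this yields precisely the estimate \eqref{estimthmK2}.

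For sharpness I would test the Trimble extremals $f_\alpha$ from \eqref{eq 20}, for which $a_2=\alpha$ and $a_3-a_2^2=\frac{1-\alpha^2}{3}\ge0$. In case $(ii)$, the choice $\alpha=1$ gives $a_3-a_2^2=0$ and $\left|a_3-\lambda a_2^2\right|=|1-\lambda|$, so $F_{\lambda,\mu}(f_1)=|1-\lambda|-\mu$, matching $\psi_2(1)$; the same computation applies to every rotation $\frac{z}{1-\mathrm{e}^{i\theta}z}$, which has $|a_2|=1$ and $a_3-a_2^2=0$, so these attain equality for arbitrary $\lambda$. In case $(i)$, taking $\alpha=t_0$ I would need $\left|a_3-\lambda a_2^2\right|=|1-\lambda|t_0^2-\frac{1-t_0^2}{3}$ in order for the whole chain of inequalities to collapse to equalities and give $F_{\lambda,\mu}(f_{t_0})=\psi_2(t_0)$.

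The main obstacle—indeed the only delicate point—is to pin down exactly when this last equality holds, which is where the parameter conditions in $(i)$ come from. Equality in the reverse triangle step forces $(1-\lambda)a_2^2$ and $a_3-a_2^2$ to be anti-parallel; since the latter is a nonnegative real for $f_\alpha$, this requires $1-\lambda$ to be real and negative, and combined with the case hypothesis $|1-\lambda|\ge-\frac13+\frac{\mu}{2}$ it produces the upper restriction $1-\lambda\le\frac13-\frac{\mu}{2}$. Furthermore, $\left|\frac{1-t_0^2}{3}-|1-\lambda|t_0^2\right|$ equals $|1-\lambda|t_0^2-\frac{1-t_0^2}{3}$ only when $|1-\lambda|t_0^2\ge\frac{1-t_0^2}{3}$; substituting $t_0=\frac{\mu}{2(|1-\lambda|+1/3)}$ and simplifying, this reduces exactly to $1-\lambda\ge\frac13-\frac34\mu^2$. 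Together these give the interval $\frac13-\frac34\mu^2\le1-\lambda\le\frac13-\frac12\mu$, which is nonempty precisely because $\mu>\frac23$; a direct substitution then confirms $\psi_2(t_0)=-\frac13-\frac{\mu^2}{4(|1-\lambda|+1/3)}$, so that $f_{t_0}$ attains the bound and the proof is complete.
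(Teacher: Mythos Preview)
Your proposal is correct and follows essentially the same approach as the paper: write $a_3-\lambda a_2^2=(a_3-a_2^2)+(1-\lambda)a_2^2$, apply the reverse triangle inequality together with Trimble's bound \eqref{eq 19} to obtain $F_{\lambda,\mu}(f)\ge\psi_2(|a_2|)$ with $\psi_2(t)=\bigl(|1-\lambda|+\tfrac13\bigr)t^2-\mu t-\tfrac13$ on $[0,1]$, minimize according to whether the vertex $t_0$ lies in $[0,1]$, and verify sharpness via the Trimble extremals $f_\alpha$. Your derivation of the parameter window in case $(i)$ is in fact slightly more transparent than the paper's: where the paper simply restricts to real $\lambda$ and manipulates the equality \eqref{eq 26} directly, you extract the condition ``$1-\lambda$ real negative'' from the anti-parallel requirement in the reverse triangle inequality and then obtain the second inequality from $|1-\lambda|t_0^2\ge\frac{1-t_0^2}{3}$, arriving at the same interval $\tfrac13-\tfrac34\mu^2\le1-\lambda\le\tfrac13-\tfrac12\mu$.
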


\begin{proof}
In the beginning, let note that for every $f\in\mathcal{K}$, since $|a_{2}|\le1$ we get
\[
F_{\lambda,\mu}(f)=\left|a_{3}-\lambda a_{2}^{2}\right|-\mu|a_{2}|\ge-\mu,
\]
that is $F_{\lambda,\mu}(f)\geq-\mu$, which is equivalent to $\left|a_{3}-\lambda a_{2}^{2}\right|\geq\mu\left(|a_{2}|-1\right)$. For $\lambda=1$ the equality is obtained if $f(z)=\frac{z}{1-z}$ and its rotations.

For $\lambda\neq1$, we can obtain better estimates as the next consideration shows. Using Lemma \ref{lem-2} and the triangle's inequality we have
\begin{align*}
F_{\lambda,\mu}(f)&=\left|\left(a_{3}-a_{2}^{2}\right)+(1-\lambda)a_{2}^{2}\right|-\mu|a_{2}|\geq
|1-\lambda||a_{2}|^{2}-\left|a_{3}-a_{2}^{2}\right|-\mu|a_{2}|\\
&\geq|1-\lambda||a_{2}|^{2}-\frac{1}{3}\left(1-|a_{2}|^{2}\right)-\mu|a_{2}|=\left(|1-\lambda|+\frac{1}{3}\right)|a_{2}|^{2}-\mu|a_{2}|-\frac{1}{3}\\
&=:\psi_{2}\left(|a_{2}|\right),
\end{align*}
where
\begin{equation}\label{eq 25}
\psi_{2}(t)=\left(|1-\lambda|+\frac{1}{3}\right)t^{2}-\mu t-\frac{1}{3},\;0\leq t\leq1.
\end{equation}

From \eqref{eq 25} we have $\psi_{2}(0)=-\frac{1}{3}$, $\psi_{2}(1)=|1-\lambda|-\mu$, and if
\[
0\le t_{0}=\frac{\mu}{2\left(|1-\lambda|+\frac{1}{3}\right)}\le1
\]
then the function $\psi_{2}$ has its minimum equal to
\[
\psi_{2}\left(t_{0}\right)=-\frac{1}{3}-\frac{\mu^{2}}{4\left(|1-\lambda|+\frac{1}{3}\right)}.
\]
We note that $0\le t_{0}\le1$ if and only if $|1-\lambda|\ge\frac{\mu}{2}-\frac{1}{3}$, with $\mu>\frac{2}{3}$ because $\mu=\frac{2}{3}$ implies the excluded case $\lambda=1$.

In case $|1-\lambda|\leq\frac{\mu}{2}-\frac{1}{3}$, $\mu>\frac{2}{3}$, we have
\[
\min\left\{\psi_{2}(t):t\in[0,1]\right\}=\min\left\{\psi_{2}(0);\psi_{2}(1)\right\}=|1-\lambda|-\mu.
\]

For the second case it is evident that the equality holds for the function $f_{1}$ and it's rotations. Like for the first case, the equality $F_{\lambda,\mu}(f)=\psi_{2}\left(t_{0}\right)$ could be possible if and only if we have equality in \eqref{eq 19}, i.e. if $f$ has the form $f_{t_{0}}$ defined by \eqref{eq 20} for $\alpha=t_{0}$ and $t_{0}=\frac{\mu}{2\left(|1-\lambda|+\frac{1}{3}\right)}$. The condition $F_{\lambda,\mu}(f)=\psi_{2}\left(t_{0}\right)$ is equivalent to
\[
\left|\frac{1}{3}\left(2t_{0}^{2}+1\right)-\lambda t_{0}^{2}\right|-\mu t_{0}=\left(|1-\lambda|+\frac{1}{3}\right)t_{0}^{2}-\mu t_{0}-\frac{1}{3},
\]
where $|1-\lambda|\geq\frac{\mu}{2}-\frac{1}{3}$, $\mu>\frac{2}{3}$, or
\begin{equation}\label{eq 26}
\left|\left((1-\lambda)-\frac{1}{3}\right)t_{0}^{2}+\frac{1}{3}\right|=\left(|1-\lambda|+\frac{1}{3}\right)t_{0}^{2}-\frac{1}{3}.
\end{equation}

From the condition $|1-\lambda|\geq\frac{\mu}{2}-\frac{1}{3}$, $\mu>\frac{2}{3}$, let consider the case $\lambda\in\mathbb{R}$, hence $1-\lambda\leq\frac{1}{3}-\frac{\mu}{2}<0$. Under this assumption the left hand side of the equality \eqref{eq 26} becomes
\[
L=\left|\left(-|1-\lambda|-\frac{1}{3}\right)t_{0}^{2}+\frac{1}{3}\right|=\left|\left(|1-\lambda|+\frac{1}{3}\right)t_{0}^{2}-\frac{1}{3}\right|,
\]
hence the equality \eqref{eq 26} is valid if and only if $\left(|1-\lambda|+\frac{1}{3}\right)t_{0}^{2}-\frac{1}{3}\geq0$, i.e.
$|1-\lambda|\leq\frac{3}{4}\mu^{2}-\frac{1}{3}$, or since $1-\lambda<0$, whenever
\[
1-\lambda\geq\frac{1}{3}-\frac{3}{4}\mu^{2},\;\mu>\frac{2}{3},
\]
and the proof of the theorem is complete.
\end{proof}

\begin{rem}\label{remlastthm}
1. From the beginning of the Theorem \ref{thrmK2} and the obtained results, we also note that both estimates of \eqref{estimthmK2} are greater or equal than $-\mu$ under the conditions given in the statement of the theorem.

2. Let denote by
\[
\mathcal{D}_{\rho}=\left\{(\lambda,\mu)\in\mathbb{R}^{2}:\frac{1}{3}-\frac{3}{4}\mu^{2}\leq 1-\lambda\leq\frac{1}{3}-\frac{1}{2}\mu,\;\rho>\mu>\frac{2}{3}\right\},\;\frac{2}{3}<rho<+\infty,
\]
the restrictions for the parameters $\lambda$ and $\mu$ for the function $f_{t_{0}}$ that appears to the item $(i)$ of the last theorem.
Using the MAPLE\texttrademark{} computer software, in the below Figure \ref{figexth2K} appears with blue color the set $\mathcal{D}_{10}$, which shows the expected result that for big positive values of $\mu$ the range for the values of $\lambda$ become ``smaller''.
\begin{figure}[h!tb]
\includegraphics[width=0.5\textwidth]{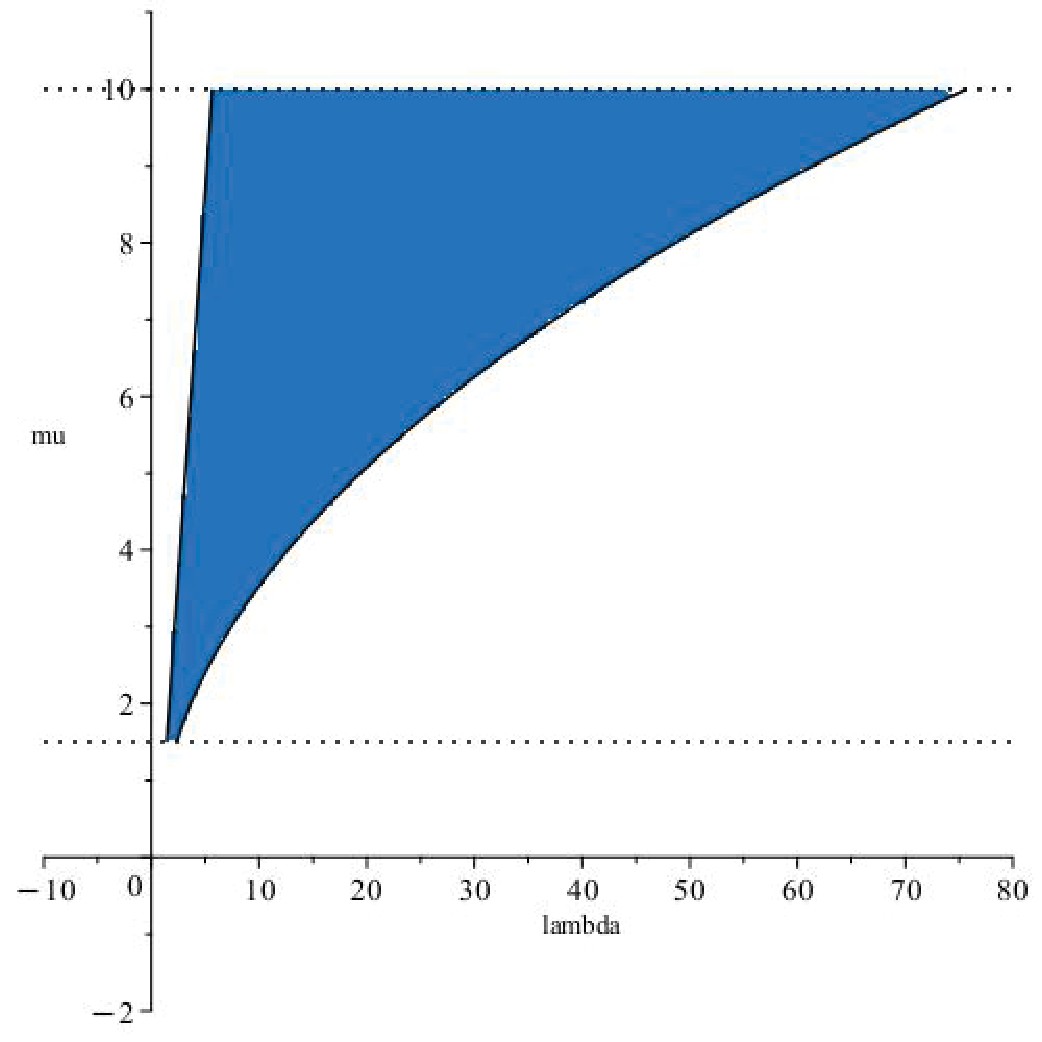}
\caption{Figure of $\mathcal{D}_{10}$ for the Remark \ref{remlastthm}}
\label{figexth2K}
\end{figure}
\end{rem}


\begin{thebibliography}{99}

\bibitem{bieber}
L. Bieberbach, \"{U}ber die Koeffizientem derjenigen Potenzreihen, welche eine schlichte Abbildung des Einheitskreises vermitteln, Preuss. Akad. Wiss. Sitzungsb., 1916, 940--955.

\bibitem{duren}
P.L. Duren, Univalent function, Springer-Verlag, New York, 1983.

\bibitem{fekszeg}
M. Fekete, G. Szeg\H{o}, Eine Bemerkung \"{u}ber ungerade schlichte Funktionen, J. Lond. Math. Soc., {\bf 8}(1933), 85--89.

\bibitem{goodman}
A.W. Goodman, Univalent Functions, Vol. I, Mariner Publishing Company Inc., 1983.

\bibitem{lecko-part23}
A. Lecko, D. Partyka, Successive logarithmic coefficients of univalent functions, Comput. Methods Funct. Theory, {\bf 24}(2023), 693--705. https://link.springer.com/article/10.1007/s40315-023-00500-9

\bibitem{kufarev}
P.P. Kufarev, On one-parameter families of analytic functions, Mat. Sb., {\bf 13}(1943), 87--118. (in Russian)

\bibitem{lecko}
A. Lecko, D. Partyka, A generalized Fekete-Szeg\H{o} functional and initial successive coefficients of univalent functions, Bull. Sci. Math., {\bf 197}(2024), Art. ID. 103527. https://www.sciencedirect.com/science/article/pii/S0007449724001453?via

\bibitem{lowner}
K. L\"{o}wner, Untersuchungen \"{u}ber die Verzerrung bei konformen Abbildungen des Einheitskreises $|z|<1$, Leipzig Berichte, {\bf 69}(1917), 89--106. FM 46-556.

\bibitem{pommer}
C. Pommerenke, Univalent Functions, Vandenhoeck and Ruprecht, G\"ottingen, Germany, 1975.

\bibitem{study}
E. Study, Vorlesungen \"{U}ber ausgew\"{a}hlte Gegenst\"{a}nde der Geometrie, 2. Heft, Leipzig und Berlin, 1913.

\bibitem{trimble}
S.Y. Trimble, A Coefficient Inequality for Convex Univalent Functions, Proc. Amer. Math. Soc., {\bf 48}(1)(1975), 266--267.

\end{thebibliography}
\end{document}